\newcommand\lra{\longrightarrow}
\newcommand\inj{\hookrightarrow}
\newcommand\lto{\longmapsto}
\newcommand\ov{\overline}
\newcommand\hil[3]{\left(\frac{{#1},{#2}}{#3}\right)}
\newcommand\sqp{\sqrt[3]{p}}
\newcommand\leg[2]{\left(\frac{#1}{#2}\right)_3}
\newcommand\bfN{{\mathbf{N}}}
\newcommand\BQ{{\mathbb{Q}}}
\newcommand\BZ{{\mathbb{Z}}}
\newcommand\CO{{\mathcal{O}}}
\newcommand\fl{{\mathfrak{l}}}
\newcommand\fp{{\mathfrak{p}}}
\newcommand\Aut{{\mathrm{Aut}}}
\newcommand\Cl{{\mathrm{Cl}}}
\newcommand\Gal{{\mathrm{Gal}}}
\renewcommand\Im{{\mathrm{Im}}}
\newcommand\Ker{{\mathrm{Ker}\,}}
\newcommand\rk{{\mathrm{rk}}}
\newcommand\fm{{\mathfrak{m}}}
\newtheorem{thm}{Theorem}[section]
\newtheorem{theorem}[thm]{Theorem}
\newtheorem{lemma}[thm]{Lemma}
\newtheorem{proposition}[thm]{Proposition}
\numberwithin{equation}{section}
\title{The $3$-class groups of $\mathbb{Q}(\sqrt[3]{p})$ and its normal closure}
\author{Jianing Li}
\address{CAS Wu Wen-Tsun Key Laboratory of Mathematics, University of Science and Technology of China, Hefei, Anhui 230026, China}
\email{lijn@ustc.edu.cn}
\author{Shenxing Zhang}
\address{School of Mathematics, Hefei University of Technology, Hefei, Anhui 230009, China}
\email{zsxqq@mail.ustc.edu.cn}
\keywords{class groups, pure cubic fields, ambiguous class number formulas}
\subjclass[2020]{11R29, 11R16}
\begin{document}
\maketitle

\begin{abstract}
We determine the $3$-class groups of $\BQ(\sqrt[3]{p})$ and $K=\BQ(\sqrt[3]{p},\sqrt{-3})$ when $p\equiv 4,7\bmod 9$ is a prime and $3$ is a cube modulo $p$.
This confirms a conjecture made by Barrucand-Cohn, and proves the last remaining case of a conjecture of Lemmermeyer on the $3$-class group of $K$.
\end{abstract}

\section{Introduction}

Let $p$ be a prime.
Let $F=\BQ(\sqp)$ and $K=\BQ(\sqp,\mu_3)$ the normal closure of $F$.
Let $A_F$ (resp. $A_K$) be the $3$-class group (i.e., $3$-Sylow subgroup of the class group) of $F$ (resp. $K$). The paper aims to prove the following result.

\begin{theorem}\label{thm:main}
Assume that $p\equiv 4,7\mod 9$ is a prime such that the cubic residue symbol $\leg{3}{p}=1$.
Then $A_F\cong \BZ/3\BZ$ and $A_K\cong(\BZ/3\BZ)^2$.
\end{theorem}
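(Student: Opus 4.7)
The plan is to split $A_K$ using complex conjugation and then determine each half via ambiguous class number formulas combined with the Brauer--Kuroda relation for the $S_3$-extension $K/\BQ$. Since $[K:F]=2$ is a unit in $\BZ_3$, the idempotents $(1\pm\tau)/2\in\BZ_3[\tau]$ yield a splitting $A_K=A_K^+\oplus A_K^-$ and the transfer gives $A_F\xrightarrow{\sim}A_K^+$. It therefore suffices to prove $|A_F|=3$ and $|A_K|=9$, from which $A_K\cong(\BZ/3\BZ)^2$ follows automatically.

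For the size of $A_K$, I would first apply Chevalley's ambiguous class number formula to the cyclic cubic Kummer extension $K/k$, where $k=\BQ(\sqrt{-3})$ has trivial class number. Under $p\equiv 4,7\bmod 9$, the ramified primes of $K/k$ are exactly $\fp,\bar\fp\mid p$ and $\fl=(1-\omega)\mid 3$, each with $e=3$, so
\[
|A_K^\sigma|=\frac{3^{3}}{3\,[E_k:E_k\cap N_{K/k}K^{\times}]}.
\]
The unit index is read off from Hasse's norm theorem by checking whether $-\omega\in E_k$ is a local norm at every ramified place; a direct residue calculation using $p\not\equiv 1\bmod 9$ shows $-\omega$ is not a cube modulo $\fp$, giving index $3$ and $|A_K^\sigma|=3$. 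The Brauer--Kuroda identity $\zeta_K\zeta_\BQ^2=\zeta_k\zeta_F^2$ then yields $h(K)=h(F)^2 h(k)/Q$ with $Q\in\{1,3\}$ a unit index in $E_K$, and the hypothesis $\leg{3}{p}=1$ is used to pin down $Q=1$.

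For the exact size of $A_F$, the classical genus-theoretic bound for $\BQ(\sqp)$ shows $\rk_3 A_F\le 1$, so $A_F$ is cyclic. The congruence $3\equiv\alpha^3\bmod p$ provided by $\leg{3}{p}=1$ produces an explicit unramified cyclic cubic extension of $F$, forcing $A_F\supseteq\BZ/3\BZ$. A further application of Chevalley's formula to this unramified cubic extension, combined with a capitulation argument, shows $A_F=\BZ/3\BZ$ exactly. Assembling, $|A_K|=|A_F|^2=9$, and the theorem follows.

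The main technical obstacle I anticipate is the unit-index computation for $Q$ in Brauer--Kuroda, which requires a concrete description of the fundamental units of $F$ and their cubic residuacity behaviour over $k$, all controlled by the joint hypotheses $p\equiv 4,7\bmod 9$ and $\leg{3}{p}=1$. Equally delicate is ruling out $|A_F|=9$, which is the actual content of the Barrucand--Cohn conjecture in this case and requires a careful capitulation analysis in the unramified cubic extension of $F$.
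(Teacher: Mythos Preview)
Your opening moves are sound and partly coincide with the paper: the decomposition $A_K=A_K^+\oplus A_K^-$ with $A_K^+\cong A_F$ is exactly how the paper finishes, and your Chevalley computation $|A_K^{\langle\sigma\rangle}|=3$ for $K/k$ is correct (the paper performs the analogous calculation for $M/k$). The genuine difficulty, which you correctly locate in your last paragraph, is where the proposal stops being a proof.

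Both of your remaining steps are circular or undetermined. The unit index $Q$ in the Brauer--Kuroda relation $h_K=\tfrac{q}{3}h_F^2$ (your $Q=3/q$) is precisely the conjecture of \cite{ATIA20}; the paper derives $q=3$ as a \emph{corollary} of Theorem~\ref{thm:main}, not as an input. There is no known way to read off $q$ from $\leg{3}{p}=1$ via fundamental units of $F$---that would already prove the theorem. Likewise, ``Chevalley on the unramified cubic extension of $F$ plus capitulation'' does not bound $|A_F|$ from above: Chevalley applied to $FM^+/F$ gives $|A_{FM^+}^G|=|A_F|/3$, which merely transfers the problem to $A_{FM^+}$. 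No capitulation argument in $FM^+$ alone will rule out $|A_F|=9$; this is exactly the Barrucand--Cohn conjecture that remained open.

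The paper's missing idea is to go \emph{above} $K$ to $L=KM$ (with $M=k\cdot M^+$, $M^+$ the cubic subfield of $\BQ(\mu_p)$), and to work with the $S$-class group for $S=\{\fp\}$, $\fp\mid\alpha$. The crucial point is that $M$ contains an explicit $\fp$-unit $\beta=\sqrt[3]{p\alpha}\big/\bfN_{\BQ(\mu_p)/M^+}(1-\zeta_p)$ whose Hilbert symbols with $p$ at the three primes of $M$ above $3$ are \emph{not all equal} (here both hypotheses $p\equiv 4,7\bmod 9$ and $\leg{3}{p}=1$ are used). This forces the $S$-unit index in Chevalley's formula for $L/M$ up to $9$, so $(\Cl_{L,\{\fp\}})_3=0$. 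Descending via Chevalley for the unramified extension $L/K$ then gives $(\Cl_{K,\{\fp'\}})_3\cong\BZ/3\BZ$, and since $\fp'^3=(\alpha)$ this yields $|A_K|\le 9$. The lower bound $|A_K|\ge 9$ comes from $A_L\ne 0$ (again Chevalley for $L/M$, now with $S=\emptyset$). Your framework gives no mechanism for the upper bound; the passage through $L$ and the explicit $S$-unit $\beta$ is the new ingredient.
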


This result confirms a conjecture made by Barrucand-Cohn in \cite[\S 8]{BaC70}, and later mentioned by Barrucand-Williams-Baniuk, Williams and Gerth in \cite[\S 8, Conjecture 1]{BWB76}, \cite[p. 273]{Wil82} and \cite[p. 474]{Ger05}.
Theorem~\ref{thm:main} also completes a proof of a Lemmermeyer's conjecture on $A_K$ in \cite[Conjecture 5, \S 1.10]{Lem10} when combining with the following known results:

\begin{enumerate}
\item If $p\equiv 2\bmod 3$, then the groups $A_F$ and $A_K$ are both trivial; see \cite{Hon71}.
\item If $p\equiv 1\bmod 3$, then $A_F$ is cyclic non-trivial and $\rk(A_K) =1$ or $2$ where $\rk(A_K)$ is the $3$-rank of $A_K$; see \cite{Ger05}. 	
\item If $p\equiv 1\bmod 9$, then $\rk(A_K)=2$ if and only if $9$ divides $|A_F|$; see \cite[Lemma 5.11]{CaE05} and \cite{Ger05}.
\item If $p\equiv 4,7\bmod 9$ and $\leg{3}{p}\neq 1$; then $A_F \cong A_K\cong \BZ/3\BZ$; see \cite{BWB76} or \cite{Ger05}.
\end{enumerate}

We give two consequences of Theorem~\ref{thm:main}.
Let $E_K$ be the group of units of $K$.
Let $E'_K$ be the subgroup of $E_K$ generated by the units of non-trivial subfields of $K$.
Write $q=[E_K:E_{K'}]$. One has (\cite[Theorem 12.1, 14.1]{BaC71})
\[
q= 1 \text{ or } 3 \quad \text{ and } \quad h_K=\frac{q }{3}h^2_F.
\]
Here $h_K$ (resp. $h_F$) is the class number of $K$ (resp. $F$).
Thus, if $p\equiv 4,7\bmod 9$ and $\leg{3}{p}=1$, Theorem~\ref{thm:main} implies that $q=3$. This confirms a conjecture made in \cite{ATIA20}.

Assume $p\equiv 4,7\bmod 9$. Theorem~\ref{thm:main} implies that the norm equation $\bfN_{F/\BQ}(x)=3$ has a solution $x\in \CO_F$ if and only if $\leg{3}{p}=1$, as mentioned in \cite[p. 273]{Wil82}.
Since $\CO_F=\BZ[\sqrt[3]{p}]$, this is to say, the Diophantine equation 
	\[x^3_1+p x^3_2+p^2x^3_3-3p x_1x_2x_3=3\]
has solutions $(x_1,x_2,x_3)\in \BZ$ if and only if $\leg{3}{p}=1$.

\section{The proof}

\subsection{Chevalley's ambiguous class number formula}

We first review the $S$-version of Chevalley's ambiguous class number formula which will be used.
For a finite set $S$ of prime ideals of a number field $L$,  the {\em $S$-class group} of $K$ is defined as
	\[\Cl_{K,S}:=\Cl_K/\langle[\fp]:\fp\in S\rangle, \]
where $\Cl_K$ denotes the class group of $K$ and $[\fp]$ denotes the ideal class of $\fp$. Let $E_{K,S}:=\CO^\times_{K,S}$ denote the group of $S$-units of $K$. Let $L/K$ be a finite cyclic extension with Galois group $G$.
For a finite set $S$ of prime ideals of $K$, we denote by $\Cl_{L,S}=\Cl_{L,S_L}$ for simplicity, where $S_L$ is the set of primes of $L$ lying above those in $S$. Chevalley's ambiguous class number formula states that the order of the $G$-invariant subgroup of $\Cl_{L,S}$ is given by
	\begin{equation}\label{eq:amb}
		|\Cl_{L,S}^G|= |\Cl_{K,S}|\cdot\frac{\prod\limits_{v\notin S}e_v \cdot \prod\limits_{v\in S}e_v f_v }
			{[L:K]\cdot[E_{K,S}:E_{K,S}\cap \bfN L^\times]}.
	\end{equation}
Here the first product runs over all places of $K$ not in $S$, $e_v$ and $f_v$ are the ramification index and the residue degree of $v$ respectively, and $\bfN=\bfN_{L/K}$ is the norm map.
For a proof of this formula, see \cite{LiY20} for example. The unit index in \eqref{eq:amb} can be computed by Hilbert symbols provided that $L/K$ is a Kummer extension.

\begin{proposition}\label{prop:hasse}
Let $L/K$ be a cyclic extension of degree $d$ and $\mu_d\subset K$. Then $L=K(\sqrt[d]{a})$ for some $a\in K$. Let $\mathrm{Ram}$ be the set of ramified places of $K$. Define 
	\[\begin{split}
	\rho: \frac{E_{K,S}}{(E_{K,S})^d}&\lra\prod_{v \in S \cup \mathrm{Ram}} \mu_d\\
	x&\longmapsto\biggl(\hil{x}{a}{v}_d\biggr)_{v\in S \cup \mathrm{Ram} }.
	\end{split}\]
	Then the kernel of $\rho$ is given by
	\[\Ker \rho=\frac{E_{K,S}\cap \bfN L^\times}{(E_{K,S})^d}\]
	and hence the size of the image is given by
	\[|\Im(\rho)|=[E_{K,S}:E_{K,S}\cap \bfN L^\times],\]
	which is at most $d^{|S\cup \mathrm{Ram}|-1}$.
\end{proposition}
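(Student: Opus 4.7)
The plan is to combine the local-norm interpretation of the Hilbert symbol with the Hasse norm theorem for cyclic extensions. The key standard fact I will invoke is: for $L=K(\sqrt[d]{a})$ with $\mu_d\subset K$, an element $x\in K_v^\times$ satisfies $\hil{x}{a}{v}_d=1$ if and only if $x\in\bfN_{L_w/K_v}L_w^\times$ for a (hence any) place $w$ of $L$ above $v$. This is classical Kummer/local class field theory and I would simply cite it.

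For the identification of $\Ker\rho$, the main point is to show that vanishing of $\rho(x)$ already forces $x$ to be a local norm at \emph{every} place of $K$, not only at places in $S\cup\mathrm{Ram}$. Indeed, if $v\notin S\cup\mathrm{Ram}$ then $v$ is unramified in $L$ and $x\in E_{K,S}$ is a unit at $v$; since the norm map on local units is surjective in an unramified extension, $x$ is automatically a local norm at $v$ and $\hil{x}{a}{v}_d=1$. Consequently $\rho(x)=1$ is equivalent to $x$ being a local norm everywhere, and the Hasse norm theorem (valid precisely because $L/K$ is cyclic) upgrades this to $x\in\bfN L^\times$. Together with the obvious inclusion $(E_{K,S})^d\subset\bfN L^\times$ (from $\bfN_{L/K}(y)=y^d$ for $y\in K^\times$), this gives
\[
\Ker\rho=\frac{E_{K,S}\cap\bfN L^\times}{(E_{K,S})^d},
\]
and the size formula for $\Im(\rho)$ follows from the first isomorphism theorem.

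For the final bound $|\Im(\rho)|\leq d^{|S\cup\mathrm{Ram}|-1}$, I will apply Hilbert reciprocity $\prod_v \hil{x}{a}{v}_d=1$, the product ranging over all places of $K$. By the unit-at-unramified-place vanishing noted above, the factors with $v\notin S\cup\mathrm{Ram}$ are trivial, so $\prod_{v\in S\cup\mathrm{Ram}}\hil{x}{a}{v}_d=1$. Hence $\Im(\rho)$ lies in the kernel of the product map $\prod_{v\in S\cup\mathrm{Ram}}\mu_d\to\mu_d$, which is a subgroup of order $d^{|S\cup\mathrm{Ram}|-1}$. No serious technical obstacle is expected; the argument is a textbook local-to-global assembly. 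The main thing to be careful about is the correct invocation of the Hasse norm theorem, which needs cyclicity of $L/K$ (satisfied by hypothesis) and which can fail for noncyclic abelian extensions.
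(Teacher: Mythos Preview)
Your proposal is correct and is precisely the argument the paper has in mind: the paper's proof merely states that the result is a ``standard direct consequence of local class field theory, Hasse's norm theorem, and the product formula for Hilbert symbols'' and cites \cite[\S 2]{LOXZ20} for details. Your write-up spells out exactly these three ingredients in the expected way, including the key observation that an $S$-unit is automatically a local norm at every unramified place outside $S$.
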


\begin{proof}
	This result is a standard direct consequence of local class field theory, Hasse's norm theorem, and the product formula for Hilbert symbols. For details, see \cite[\S 2]{LOXZ20}.
\end{proof}

If $\sigma\in \Aut(K)$ and $v$ is a prime of $K$, we have (\textit{loc. cit.})
\begin{equation}\label{eq:hilbert}
\sigma \hil{a}{b}{v}_d= \hil{\sigma(a)}{\sigma(b)}{\sigma(v)}_d,\quad a,b\in K^\times.
\end{equation}

For our applications, the degree $[L:K]$ is a power of a prime $\ell$.
For any finite generated abelian group $A$, we denote by $A_\ell=A\otimes\BZ_\ell$ where $\BZ_\ell$ is the ring of $\ell$-adic integers.
If $A$ is finite, $A_\ell$ is the $\ell$-primary subgroup of $A$.
If there is no ambiguity, we write $a$ for $a\otimes 1\in A_\ell$ for $a\in A$.
Clearly, the formula \eqref{eq:amb} still holds by replacing $(\Cl_{L,S})^G$ and $\Cl_{K,S}$ with $\bigl((\Cl_{L,S})_\ell\bigr)^G=(\Cl^G_{L,S})_\ell$ and $(\Cl_{K,S})_\ell$ respectively.

The following well known fact which is proved by counting the orbits of the $G$-action or by Nakayama's Lemma will be used frequently:
	\[  (\Cl_{L,S})_\ell=0 \text{ if and only if } (\Cl^G_{L,S})_\ell=0.\]

\subsection{Proof of Theorem~\ref{thm:main}}

From now on, assume $p\equiv 1\bmod 3$.
Denote by
\begin{itemize}
	\item $k=\BQ(\mu_3)$;
	\item $M^+$ the unique cubic subfield of $\BQ(\mu_p)$, which is real;
	\item $M=M(\mu_3)$ a quadratic extension of $M^+$;
	\item $L=KM=M(\sqp,\mu_3)$;
	\item $A_T=(\Cl_T)_3$ for any number field $T$.
\end{itemize}

	\[\xymatrix{
																		&L=KM																&\\
		K=\BQ(\sqp,\mu_3)\ar@{-}[ru]		&FM^+\ar@{-}[u]											&M=kM^+\ar@{-}[lu]\\
		F=\BQ(\sqp)\ar@{-}[u]\ar@{-}[ru]&k=\BQ(\mu_3)\ar@{-}[lu]\ar@{-}[ru]	&M^+\ar@{-}[u]\ar@{-}[lu]\\
																		&\BQ\ar@{-}[u]\ar@{-}[lu]\ar@{-}[ru]&
	}\]

\begin{proposition}\label{prop:cl_m}
Assume $p\equiv 1\bmod 3$.

(1) There exists $\alpha\in\CO_k$ such that $M=k(\sqrt[3]{p\alpha})$ and $p=\alpha\ov\alpha$;

(2) $A_M=0$ if and only if $p\equiv 4,7\bmod 9$.
\end{proposition}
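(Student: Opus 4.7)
For part (1), the plan is Kummer theory combined with Galois descent. Since $M/k$ is cyclic of degree $3$ with $\mu_3\subset k$, write $M=k(\sqrt[3]{\gamma})$ for some $\gamma\in k^\times$. From the tower $M=M^+\cdot k$, where $M^+\subset\BQ(\mu_p)$ is ramified only at $p$ and $k/\BQ$ is ramified only at $3$, one first checks that $M/k$ is totally ramified at the two primes $\fp,\ov\fp$ above $p$ and unramified at $\fl\mid 3$. Since $\Cl_k=1$, one may normalize $\gamma$ modulo cubes so that $(\gamma)=\fp^a\ov\fp^b$ with $a,b\in\{1,2\}$; hence $\gamma=\alpha_0^a\ov\alpha_0^b\,u$ for some fixed $\alpha_0$ with $(\alpha_0)=\fp$, $\alpha_0\ov\alpha_0=p$, and $u\in E_k=\mu_6$. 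The fact that $M/\BQ$ is abelian (cyclic of order $6$) forces $\gamma\cdot c(\gamma)\in k^{\times 3}$, where $c$ denotes complex conjugation; this gives $p^{a+b}\in k^{\times 3}$, so $a+b\equiv 0\pmod 3$ and $\{a,b\}=\{1,2\}$. Hence $\gamma$ is $p\alpha_0 u$ or $p\ov\alpha_0 u$ modulo cubes. Setting $\alpha:=\alpha_0 u$ (or $\ov\alpha_0 u$), we get $\alpha\in\CO_k$ and $\alpha\ov\alpha=p$ since $u\ov u=1$ for every $u\in\mu_6$, so $M=k(\sqrt[3]{p\alpha})$.

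For part (2), the plan is to apply Chevalley's formula \eqref{eq:amb} in its $3$-primary form to $M/k$ with $S=\emptyset$ and $G=\Gal(M/k)$. Since $A_k=0$ and only $\fp,\ov\fp$ ramify (each with $e=3$), the formula simplifies to
\[|A_M^G|=\frac{3\cdot 3}{3\cdot[E_k:E_k\cap\bfN M^\times]}=\frac{3}{[E_k:E_k\cap\bfN M^\times]}.\]
By Proposition~\ref{prop:hasse}, the unit index equals $|\Im\rho|$, where $\rho:E_k/E_k^3\to\mu_3\times\mu_3$ is given by Hilbert symbols at $\{\fp,\ov\fp\}$. Since $E_k/E_k^3$ is cyclic of order $3$ generated by $\zeta_3$, it suffices to compute a single symbol. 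The tame cubic Hilbert symbol formula with $v_\fp(p\alpha)=2$ yields
\[\hil{\zeta_3}{p\alpha}{\fp}_3=\leg{\zeta_3^2}{\fp}\equiv\zeta_3^{2(p-1)/3}\pmod{\fp},\]
which is trivial iff $3\mid(p-1)/3$, i.e., iff $p\equiv 1\pmod 9$. Hence $|\Im\rho|=3$ exactly when $p\equiv 4,7\pmod 9$, giving $|A_M^G|=1$ and thus $A_M=0$ by the Nakayama-type fact recorded after \eqref{eq:amb}. When $p\equiv 1\pmod 9$, the ambiguous subgroup has order $3$ and $A_M$ is nontrivial.

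The main technical points to handle with care are the ramification analysis at $\fl\mid 3$, needed to confirm that no wild contribution appears in $\prod_v e_v$, and the abelian descent condition on the Kummer generator, which pins $\gamma$ down up to a single unit ambiguity. Once these are in place, the argument reduces to bookkeeping with the ambiguous class number formula plus one tame Hilbert symbol evaluation.
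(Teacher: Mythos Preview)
Your proposal is correct and follows essentially the same route as the paper: Kummer theory with an abelian-descent condition for (1), and Chevalley's formula together with a single tame cubic Hilbert symbol at a prime above $p$ for (2). The only cosmetic difference is that you encode the abelian condition as $\gamma\cdot c(\gamma)\in k^{\times 3}$, whereas the paper lists the possible $\gamma$'s and rules out $\gamma=\zeta_3^a p$ by directly observing that $k(\sqrt[3]{\zeta_3^a p})/\BQ$ is non-abelian.
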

\begin{proof}
(1)	Since $p\equiv 1\bmod 3$, we can write $p=\alpha\ov \alpha$ for some $\alpha \in \CO_k$.
Note that $(\alpha)$ and $(\ov \alpha)$ are exactly the ramified primes of $k$ in $M$.
Now, since the class number of $k$ is $1$ and $M/k$ is a Kummer extension, we have
	\[M=\BQ(\sqrt[3]{\gamma}) \quad \text{and} \quad \gamma=\zeta_3^a \alpha^{b} \ov\alpha^c\]
with $a,b,c \in \{0,1,2\}$ and $bc\neq 0$.
Since $(3-a,3-b,3-c)$ gives the same field as $(a,b,c)$, 
we conclude that $M=k(\sqrt[3]{\zeta^a_3p})$ or $k(\sqrt[3]{\zeta^a_3p\alpha})$ for some $a=0,1,2$.
Since $M$ is abelian over $\BQ$ but $k(\sqrt[3]{\zeta^a_3p})/\BQ$ is not, $M$ must coincide with $k(\sqrt[3]{\zeta^a_3p\alpha})$.
By replacing $\alpha$ with $\zeta^a_3\alpha$, we have $M=k(\sqrt[3]{p\alpha})$ and $p=\alpha\bar{\alpha}$.
This proves (1).

(2) We apply \eqref{eq:amb} and Proposition~\ref{prop:hasse} to the cyclic cubic extension $M/k$. 
Let $\iota:k\inj\BQ_p$ be the embedding induced by $(\alpha)$.
Then we have the following equalities of cubic Hilbert symbols:
	\begin{equation}\label{eq:hil1}
		\hil{\zeta_3}{p\alpha}{\alpha}=\iota^{-1}\hil{\iota(\zeta_3)}{p\iota(\alpha)}{\BQ_p}
				=\iota^{-1}\hil{\iota(\zeta_3)}{\iota(\alpha)}{\BQ_p}^{-1}=\zeta_3^{(p-1)/3}.
	\end{equation}
Hence this symbol as well as the index $[E_k: E_k\cap \bfN M^\times]$ is trivial if and only if $p\equiv 1\bmod 9$.
Thus
	\[|A_M^G|=\frac{3^2}{3\cdot[E_k:E_k\cap \bfN M^\times]}=1\]
if and only if $p\equiv 4,7\bmod 9$.
By Nakayama's lemma, it turns out that $A_M$ is trivial if and only if $p\equiv 4,7\bmod 9$. This completes the proof of Proposition~\ref{prop:cl_m}. 
\end{proof}

Let $\fp$ (resp. $\fp'$) be the unique prime of $M$ (resp. $K$) lying above $\alpha\CO_k$.
Then $\alpha \CO_M = \fp^3$ and $\alpha\CO_K={\fp'}^3$.

\begin{proposition}\label{prop:split}
Assume that $p\equiv 1\bmod 3$ and $\leg{3}{p}=1$.

(1) The extensions $L/K$ and $FM^+/F$ are both abelian unramified cubic extensions.

(2) The primes $\fp$ and $\fp'$ both split in $L$.
\end{proposition}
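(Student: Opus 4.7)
The plan is to reduce both parts to one local identification at $p$: that the completions $M^+_p$ and $F_p$ coincide as subfields of $\ov{\BQ}_p$. Once this is in place, everything else is local decomposition bookkeeping, and the outside-$p$ considerations are either trivial or handled by the base-change of $M^+/\BQ$, which is unramified outside $p$.

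For the setup of (1), note that $L = KM = KM^+$ (since $k\subset K$) and that $K\cap M^+ = F\cap M^+ = \BQ$: the only cubic subfields of $K$ are the $S_3$-conjugates of $F$, which are not Galois over $\BQ$, whereas $M^+$ is cyclic cubic. Hence $L/K$ and $FM^+/F$ are cyclic cubic, in particular abelian. Unramifiedness away from $p$ is easy: at primes not above $3p$ everything in sight is unramified, and at primes above $3$ the extension $M^+/\BQ$ is itself unramified (since $M^+\subset \BQ(\mu_p)$ and $p\ne 3$), so both base-changed extensions are unramified there as well. All the real work happens at primes above $p$.

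For the key local identification $M^+_p = F_p$, I would use local class field theory. Both are tamely totally ramified cyclic cubic extensions of $\BQ_p$, so each is determined by its norm subgroup. A Hilbert-symbol computation of the type used in \eqref{eq:hil1} gives $\bfN F_p^\times = p^{\BZ}(\BZ_p^\times)^3$, since $(u,p)_3=1$ iff $u$ is a cube in $\BZ_p^\times$. On the other hand, the cubic subfield of $\BQ_p(\mu_p)$ corresponds under local reciprocity to the cubes in $\Gal(\BQ_p(\mu_p)/\BQ_p)\cong (\BZ/p)^\times$, whose preimage in $\BQ_p^\times$ is again $p^{\BZ}(\BZ_p^\times)^3$. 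The two norm subgroups coincide, so $M^+_p = F_p$ as subfields of $\ov{\BQ}_p$.

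With this equality in hand, the rest is formal. The unique prime $\fp_F$ of $F$ above $p$, and each prime $\fp'$ of $K$ above $p$, has completion $F_p$; since $M^+/\BQ$ is Galois, the minimal polynomial of a primitive element of $M^+$ has all three roots in $M^+_p = F_p$ and therefore splits completely over $F_p$. Thus $FM^+\otimes_F F_{\fp_F}\cong F_p^3$ and $L\otimes_K K_{\fp'}\cong F_p^3$, which simultaneously delivers unramifiedness at $p$ in (1) and the complete splitting of $\fp'$ in $L$ for (2). For $\fp$ in (2), combining $M=kM^+$ with $k_{\fp_\alpha}=\BQ_p$ shows $M_\fp = F_p$; writing $L=M(\sqrt[3]{p})$ and using $\sqrt[3]{p}\in F_p = M_\fp$, the polynomial $x^3-p$ splits in $M_\fp$, so $\fp$ splits completely in $L$. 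The main obstacle is the identification $M^+_p=F_p$; once that piece of local CFT is secured, both unramifiedness at $p$ and the splitting of $\fp,\fp'$ are immediate.
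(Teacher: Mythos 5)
Your proposal is correct, but it takes a genuinely different route from the paper, so it is worth comparing the two. The paper splits the work into a soft ramification argument and a global class-field-theoretic argument: for (1) it looks at the inertia group of $(\alpha)$ in the abelian extension $L/k$, notes that this inertia is a quotient of $\BZ_p^\times$ and hence cannot be all of $\Gal(L/k)\cong(\BZ/3\BZ)^2$ (the $3$-part of $\BZ_p^\times$ is cyclic), so it has order exactly $3$ and is already accounted for by the ramification in $K/k$ and $M/k$, forcing $\fp$ and $\fp'$ to be unramified in $L$; for (2) it then observes that $FM^+$ lies in the Hilbert class field of $F$ and that $\sqrt[3]{p}\,\CO_F$ is principal, so this prime splits completely in $FM^+$ by the Artin map, and the splitting of $\fp'$ and $\fp$ in $L$ follows by base change. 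You instead prove the single local identification $M^+_p=F_p=\BQ_p(\sqrt[3]{p})$ by matching norm subgroups ($\bfN F_p^\times=p^{\BZ}(\BZ_p^\times)^3$ via the tame Hilbert symbol, and the cubic subfield of $\BQ_p(\mu_p)$ has the same norm group), after which $L\otimes_K K_{\fp'}\cong F_p^3$ and $L\otimes_M M_\fp\cong F_p^3$ deliver unramifiedness at $p$ and complete splitting of $\fp,\fp'$ simultaneously; ramification away from $p$ is handled identically in both proofs (base change of $M^+/\BQ$, unramified outside $p$). Your approach is more explicit and local: it bypasses the Hilbert class field entirely, gives the splitting in the same stroke as the unramifiedness, and makes visible that neither conclusion actually uses the hypothesis $\leg{3}{p}=1$ (which is also unused in the paper's proof of this proposition); the paper's approach avoids any norm-group computation, using only the cyclicity of $3$-group quotients of $\BZ_p^\times$ plus the principality of $\sqrt[3]{p}\,\CO_F$. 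One small point you should spell out: the reason every prime of $K$ above $p$ has completion $F_p$ is that $\mu_3\subset\BQ_p$, so the unique prime $\sqrt[3]{p}\,\CO_F$ of $F$ above $p$ splits in $K=F(\mu_3)$ and the two completions coincide with $F_{\fp_F}$; this is immediate but is the hinge on which your tensor-product computation at $\fp'$ turns.
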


\begin{proof}
(1) Since $L=KM^{+}$, we have that $L/K$ is unramified outside the primes above $p$.
Denote by $I_{(\alpha)}$ the inertia group of $(\alpha)=\alpha\CO_k$ in the abelian extension $L/k$.
By local class field theory and noting that the completion of $k$ at $(\alpha)$ is $\BQ_p$, we have a surjection 
	\[\BZ^\times_p \twoheadrightarrow I_{(\alpha)}.\]
It follows that $I_{(\alpha)}$ can not be $\Gal(L/k)\cong(\BZ/3\BZ)^2$.
On the other hand, $I_{(\alpha)}$ is non-trivial since $(\alpha)$ is ramified in $K$ and $M$.
This shows that $\fp$ and $\fp'$ must be unramified in $L$.
An entirely similar argument for the prime $(\ov\alpha)=\ov\alpha\CO_k$ shows that $L/M$ and $L/K$ are both unramified outside the primes above $\ov\alpha$. 
This shows that $L/K$ is unramified everywhere. 

For the extension $FM^+/F$, first note that it is unramified outside $\sqrt[3]{p}\CO_F$ as $M^+/\BQ$ is unramified outside $p$. 
We claim that $\sqrt[3]{p}\CO_F$ is also unramified in $FM^+$. 
Otherwise, since $K/F$ is unramified at $\sqrt[3]{p}\CO_F$, the prime of $K$ above $\sqrt[3]{p}$ would be ramified in $L$. 
But this contradicts that $L/K$ is unramified whence the claim holds. 
This proves (1).

(2) We have just shown that $FM^+$ is contained in the Hilbert class field of $F$.
By class field theory, the principal prime $\sqrt[3]{p}\CO_F$ splits in $FM^{+}$.
It follows that $\fp'$ and $\fp$ both split in $L$. 
\end{proof}

\begin{lemma}\label{lem:ram}
(1) If $p\equiv 4,7\bmod 9$, then $3$ is totally ramified in $K$.

(2) If $p\equiv 1\bmod 3$ and $3$ is a cube modulo $p$, then $(1-\zeta_3)\CO_k$ splits in $M$.
\end{lemma}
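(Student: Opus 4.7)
The approach for both parts is to reduce to a ramification computation in a well-understood subfield and then combine with ramification in $k$ using multiplicativity of $e$ and $f$ in towers.

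For (1), I would analyse the ramification of $3$ separately in $F=\BQ(\sqp)$ and $k=\BQ(\mu_3)$. In $k$, one has the standard fact $3\CO_k=(1-\zeta_3)^2$, so the ramification index is $2$. For the pure cubic field $F$, the classical discriminant formula gives $\mathrm{disc}(F)=-27p^2$ whenever $p\not\equiv\pm1\bmod 9$; in this case $\CO_F=\BZ[\sqp]$ and $3$ is totally ramified with index $3$ (for example because $x^3-p\equiv(x-1)^3\bmod 3$, using $p\equiv 1\bmod 3$). The congruence $p\equiv 4,7\bmod 9$ excludes $p\equiv\pm1\bmod 9$, so this applies. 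Since the ramification indices $2$ and $3$ in $k/\BQ$ and $F/\BQ$ are coprime and their product equals $[K:\BQ]=6$, the ramification index of $3$ in the compositum $K=Fk$ must be $6$, so $3$ is totally ramified in $K$.

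For (2), the strategy is to realise $3$ as splitting completely in the cubic subfield $M^+\subset\BQ(\mu_p)$ and then transfer this information to $\fl$ via the tower $\BQ\subset k\subset M$. The Galois group $\Gal(M^+/\BQ)$ is the unique order-$3$ quotient of $(\BZ/p\BZ)^\times$, so the Frobenius at any unramified prime $q\neq p$ is the image of $q$ in this quotient. Under our hypothesis that $3$ is a cube modulo $p$, this image is trivial, hence $3$ splits completely into three primes in $M^+$, each with local field $\BQ_3$. Writing $M=M^+(\mu_3)$ and adjoining $\mu_3$ locally at each of these primes yields $\BQ_3(\mu_3)=k_\fl$, a totally ramified quadratic extension of $\BQ_3$. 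Consequently, $M$ has exactly three primes above $3$, each with $e=2$ and $f=1$ over $\BQ$. Since $\fl$ itself has $e=2$ and $f=1$ over $\BQ$, multiplicativity of $e$ and $f$ in the tower $\BQ\subset k\subset M$ forces these three primes of $M$ to lie above $\fl$ with $e=f=1$ relative to $k$, i.e., $\fl$ splits completely in $M$.

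The arguments are essentially bookkeeping once one isolates the correct subfields: the pure cubic discriminant formula handles (1), and the Galois description of $M^+$ as the cubic subfield of $\BQ(\mu_p)$ immediately converts the cubic residue hypothesis into complete splitting of $3$ in $M^+$ for (2). There is no substantial obstacle beyond tracking multiplicativity of $e$ and $f$ along the relevant towers and noting that $p\equiv 4,7\bmod 9$ falls into the generic case of the pure cubic discriminant formula.
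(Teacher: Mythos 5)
Your proposal is correct and takes essentially the same route as the paper: for (1), show $3$ is totally ramified in $F$ and ramified in $k$, then combine ramification indices in the compositum; for (2), identify $M^+$ as the fixed field of the cubes in $(\BZ/p\BZ)^\times$ inside $\BQ(\mu_p)$, so that $\leg{3}{p}=1$ makes the Frobenius $\sigma_3$ trivial on $M^+$, giving complete splitting of $3$ in $M^+$ and hence of $(1-\zeta_3)\CO_k$ in $M$. The only cosmetic difference is in (1), where the paper deduces total ramification of $3$ in $F$ from the Eisenstein polynomial $(x+p)^3-p$ instead of your appeal to $\mathrm{disc}(F)=-27p^2$, $\CO_F=\BZ[\sqp]$ and Dedekind's criterion.
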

\begin{proof}
(1) Since $(x+p)^3-p$ is an Eisenstein polynomial, $3$ is totally ramified in $F$.
Since $3$ is also ramified in $k$, it follows that $3$ is totally ramified in $K$ by counting the ramification degrees.

(2) Fix the canonical isomorphism 
	\[\begin{split}
		(\BZ/p\BZ)^\times &\cong \Gal(\BQ(\mu_p)/\BQ)\\
		a&\mapsto(\sigma_a:\zeta_p \mapsto \zeta^a_p).
	\end{split}\]
By definition, $M^+$ is the subfield of $\BQ(\mu_p)$ fixed by $(\BZ/p\BZ)^{\times3}$. 
Our assumptions imply that $\sigma_3$ is trivial on $M^+$ whence $3$ splits in $M^{+}$. It follows that $(1-\zeta_3)\CO_k$ must split in $M$.
\end{proof}

We need the following elementary fact on the local field $\BQ_3(\mu_3)$.
\begin{lemma}\label{lem:hil_local}
If $a,b \in \BZ$ with $3\nmid ab$, then the cubic Hilbert symbol of $a$ and $b$ in $\BQ_3(\mu_3)$ is trivial.
\end{lemma}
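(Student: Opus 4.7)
The plan is to exploit the Galois equivariance relation \eqref{eq:hilbert} together with the elementary fact that $\mu_3 \cap \BQ_3 = \{1\}$. The key idea will be that the symbol $\hil{a}{b}{v}_3$ should be forced to lie in the fixed field of the non-trivial element of $\Gal(\BQ_3(\mu_3)/\BQ_3)$, namely $\BQ_3$, and hence must equal $1$.

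First I will introduce $\sigma$, the non-trivial element of $\Gal(k/\BQ)$ for $k=\BQ(\mu_3)$; this is complex conjugation, sending $\zeta_3 \mapsto \zeta_3^{-1}$, and it extends to the generator of $\Gal(\BQ_3(\mu_3)/\BQ_3)$. Since $3$ is totally ramified in $k$, the unique prime $v=(1-\zeta_3)\CO_k$ above $3$ is fixed by $\sigma$, and the integers $a,b\in\BZ$ are pointwise fixed by $\sigma$ as well. Then I will apply \eqref{eq:hilbert} to conclude
\[
\sigma\hil{a}{b}{v}_3 = \hil{\sigma(a)}{\sigma(b)}{\sigma(v)}_3 = \hil{a}{b}{v}_3,
\]
so the symbol lies in $\BQ_3 \cap \mu_3$. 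This intersection is trivial, since the discriminant $-3$ of the minimal polynomial $X^2+X+1$ of $\zeta_3$ has odd $3$-adic valuation and hence is not a square in $\BQ_3$; equivalently, $\BQ_3$ does not contain $\mu_3$.

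There is no serious obstacle here; the lemma is essentially immediate from the equivariance \eqref{eq:hilbert}. I note in passing that the hypothesis $3\nmid ab$ is not actually used in this argument, which applies verbatim to any $a,b\in\BQ^\times$; the stated form is what the paper's later applications require.
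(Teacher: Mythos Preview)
Your proof is correct and takes a genuinely different route from the paper's. The paper argues by direct computation: it first observes that the Taylor series for $(1+9x)^{1/3}$ converges on $\BZ_3[\mu_3]$, so every element of $1+9\BZ_3[\mu_3]$ is a cube; together with the fact that $-1$ is a cube this reduces the problem to checking the single symbol $\hil{4}{7}{\BQ_3(\mu_3)}$, which is then rewritten as a power of $\hil{2}{2}{\BQ_3(\mu_3)}=1$. Your argument instead exploits the Galois equivariance \eqref{eq:hilbert}: since complex conjugation fixes $a$, $b$, and the unique prime above $3$, it fixes the symbol, which must therefore lie in $\mu_3^{\sigma=1}=\{1\}$. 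Your approach is cleaner and, as you note, actually proves the stronger statement that the cubic symbol vanishes for all $a,b\in\BQ_3^\times$, not just integers prime to $3$; this would in fact slightly streamline the paper's later use of the lemma in the proof of Theorem~\ref{thm:main2}, where the reduction ``$\eta\equiv a\bmod 9$ with $a\in\BZ$'' becomes unnecessary once one knows $\eta\in M^+_\fl=\BQ_3$. The paper's computational approach, on the other hand, makes the local structure of cubes in $\BQ_3(\mu_3)^\times$ explicit, which can be useful in related calculations.
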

\begin{proof}
By convergence of the Taylor expansion of $(1+9x)^{1/3}$ on $\BZ_3[\mu_3]$, every element in $1+9\BZ_3[\mu_3]$ is a cube.
Note that $-1$ is a cube whence the cubic Hilbert symbol $\hil{a}{a}{\BQ_3(\mu_3)}=1$.
Thus, we only need to show the triviality of the symbol
	\[\hil{4}{7}{\BQ_3(\zeta_3)}=\hil{4}{2}{\BQ_3(\zeta_3)}=\hil{2}{2}{\BQ_3(\zeta_3)}^2=1.\qedhere\]
\end{proof}

\begin{theorem}\label{thm:main2}
Assume that $p\equiv 4,7\bmod 9$ and $\leg{3}{p}=1$.
Then $A_L$ is non-trivial and $3$ does not divides $|\Cl_{L,\{\fp \}}|$.
\end{theorem}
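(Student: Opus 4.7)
The plan is to apply Chevalley's ambiguous class number formula~\eqref{eq:amb} and Proposition~\ref{prop:hasse} to the cyclic cubic extension $L/M$, exploiting $A_M=0$ from Proposition~\ref{prop:cl_m}(2). Write $G=\Gal(L/M)$.

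First I determine ramification in $L/M$. By Proposition~\ref{prop:split}(2) and the symmetric argument at $(\bar\alpha)$, both $\fp$ and $\bar\fp$ split in $L/M$. By Lemma~\ref{lem:ram}(2), the prime $(1-\zeta_3)$ splits completely in $M$ into three primes $\fl_1,\fl_2,\fl_3$, each with completion $\BQ_3(\zeta_3)$. Combined with Lemma~\ref{lem:ram}(1), a local computation shows each $\fl_i$ is totally ramified in $L/M$ with $e=3$, so $\prod_v e_v=27$. Since $A_M=0$ and $\fp^3=(\alpha)$ is principal, $\fp$ is itself principal; in particular $\Cl_{M,\{\fp\}}=\Cl_M$ has trivial $3$-part.

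Applying~\eqref{eq:amb} with $S=\emptyset$ and $S=\{\fp\}$ (noting $e_\fp f_\fp=1$ in the latter) gives
\[
|A_L^G|=\frac{9}{U_\emptyset},\qquad |(\Cl_{L,\{\fp\}})^G|_3=\frac{9}{U_S},
\]
where $U_T=[E_{M,T}:E_{M,T}\cap\bfN L^\times]=|\Im\rho_T|$ by Proposition~\ref{prop:hasse}, with
\[
\rho_T\colon E_{M,T}/E_{M,T}^3\lra\prod_{v\in T\cup\{\fl_1,\fl_2,\fl_3\}}\mu_3,\quad u\lto\left(\hil{u}{p}{v}_3\right)_v.
\]
For a unit $u\in E_M$, tame vanishing kills the $\fp$ and $\bar\fp$ coordinates (residue characteristic $\neq 3$, both arguments units), so only the $\fl_i$'s contribute to $\rho_\emptyset(u)$.

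The core computation is to show $U_\emptyset=3$ and $U_S=9$. For any $u\in E_{M^+}$, Lemma~\ref{lem:hil_local} applied in $M_{\fl_i}\cong\BQ_3(\zeta_3)$ gives $(u,p)_{\fl_i}=1$: since $3$ splits in $M^+$, any $u\in E_{M^+}$ lies in $\BZ_3^\times$ locally. For $u=\zeta_3$, the product formula in $k$ together with~\eqref{eq:hil1} at $(\alpha)$ and the analogous tame computation at $(\bar\alpha)$ pins down $(\zeta_3,p)_{(1-\zeta_3)}=\zeta_3^{-(p-1)/3}$, which is non-trivial precisely because $p\equiv 4,7\bmod 9$; this value is inherited at each $\fl_i$ under $M_{\fl_i}\cong k_{(1-\zeta_3)}$. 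Since $[E_M:\mu_M E_{M^+}]$ is coprime to~$3$, we conclude $|\Im\rho_\emptyset|=3$, so $|A_L^G|=3$ and $A_L\neq 0$ follows by Nakayama's lemma. For $U_S$, the added generator $\pi$ (with $\fp=(\pi)$) contributes a tame symbol $(\pi,p)_\fp$ at the new $\fp$-coordinate, which one verifies is non-trivial via the tame formula as a cubic residue in $\mathbb{F}_p=\CO_M/\fp$. This gives $|\Im\rho_S|=3\cdot 3=9$, so $|(\Cl_{L,\{\fp\}})^G|_3=1$, whence $3\nmid|\Cl_{L,\{\fp\}}|$ by Nakayama.

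The main obstacle will be the verification that $(\pi,p)_\fp\neq 1$: the principal generator $\pi$ is only determined modulo units of $M$, and non-triviality of the resulting cubic residue character requires carefully unpacking the interaction between $p\equiv 4,7\bmod 9$ and $\leg{3}{p}=1$.
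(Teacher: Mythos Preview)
Your plan is essentially the paper's plan through the $S=\emptyset$ step, and that part is fine. The $S=\{\fp\}$ step, however, contains a real error.

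You claim that the extra generator $\pi$ with $(\pi)=\fp$ contributes a nontrivial tame symbol $\hil{\pi}{p}{\fp}$ in the new $\fp$-coordinate of $\rho_S$. But you have already invoked Proposition~\ref{prop:split}(2) to say that $\fp$ \emph{splits} in $L=M(\sqrt[3]{p})$. Splitting means $p$ is a cube in the completion $M_\fp$, so the cubic Hilbert symbol $\hil{\,\cdot\,}{p}{\fp}$ is identically trivial; in particular $\hil{\pi}{p}{\fp}=1$. (Note also that your ``tame vanishing'' reasoning does not apply here anyway: $\pi$ is a uniformizer and $v_\fp(p)=3$, so neither argument is a unit at $\fp$.) Thus the $\fp$-coordinate of $\rho_S$ is dead, and your argument for $U_S=9$ collapses: you have only shown $\Im\rho_S\supseteq\Im\rho_\emptyset$, which gives $U_S\geq 3$ and hence $|(\Cl_{L,\{\fp\}})^G|_3\leq 3$, not $=1$.

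What is actually needed is to show that the image of $\pi$ under the three $\fl_i$-coordinates is \emph{not} on the diagonal $\{(\zeta,\zeta,\zeta)\}$ generated by $\rho_\emptyset(\zeta_3)$. The paper does this by choosing a specific generator $\beta=\sqrt[3]{p\alpha}\big/\bfN_{\BQ(\mu_p)/M^+}(1-\zeta_p)$ of $\fp$ and observing that $\sigma^{-1}(\beta)/\beta=\zeta_3^{\pm 1}\eta$ with $\eta\in E_{M^+}$. Then \eqref{eq:hilbert} gives $\hil{\beta}{p}{\sigma(\fl)}=\hil{\sigma^{-1}(\beta)}{p}{\fl}$, and since the $\eta$-part is trivial by Lemma~\ref{lem:hil_local} while the $\zeta_3$-part is nontrivial by \eqref{eq:non-trivial}, one gets $\hil{\beta}{p}{\fl}\neq\hil{\beta}{p}{\sigma(\fl)}$. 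This is the missing idea: the nontriviality you seek lives in the $\fl_i$-coordinates via the Galois action of $\sigma$ on a well-chosen $\pi$, not at $\fp$.
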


\begin{proof}
We first apply \eqref{eq:amb} on $L/M$ with $S=\emptyset$ to prove $3$ divides $|A^G_L|$ where $G=\Gal(L/M)$. 
By Proposition~\ref{prop:split} and Lemma~\ref{lem:ram}, exactly the three primes $\fl,\sigma(\fl),\sigma^2(\fl)$ of $M$ lying above $(1-\zeta_3)\CO_k$ are ramified in $L/M$, where $\sigma$ is a generator of $\Gal(M/k)$.
By Proposition~\ref{prop:cl_m}, we know that $|A_M|=1$.
It remains to compute the unit index.
Note that $L=M(\sqrt[3]{p})$. 
To apply Proposition~\ref{prop:hasse}, we define
	\[\begin{split}
		\rho:E_M&\lra\mu_3^3\\
		u&\lto\biggl(\hil{u}{p}{\fl}, \hil{u}{p}{\sigma(\fl)}, \hil{u}{p}{\sigma^2(\fl)}\biggr).
	\end{split}\]
Since $M/M^+$ is a CM-extension, the group $(E_{M})_3$ is generated by $(E_{M^+})_3$ and $\zeta_3$ by \cite[Theorem 4.12]{Was82}. The completion of $M^{+}$ at a prime above $3$ is $\BQ_3$. It follows that $\eta \equiv a \bmod 9$ with $a\in \BZ$ for any $\eta \in E_{M^+}$. Then by Lemma~\ref{lem:hil_local}, 
	\begin{equation}\label{eq:trivial}
		|\rho( E_{M^+}) |=1.
	\end{equation}
Now we compute $\rho(\zeta_3)$. 
Since $\sigma(\zeta_3)=\zeta_3$, by \eqref{eq:hilbert} we have 
	\[\hil{\zeta_3}{p}{\fl}=\hil{\zeta_3}{p}{\sigma(\fl)}=\hil{\zeta_3}{p}{\sigma^2(\fl)}.\]
By Lemma~\ref{lem:ram}, the completion of $M$ at $\fl$ is $\BQ_3(\mu_3)$. Applying the product formula for cubic Hilbert symbols on the field $\BQ(\mu_3)$ gives
	\[\hil{\zeta_3}{p}{(\alpha)} \hil{\zeta_3}{p}{(\ov\alpha)} \hil{\zeta_3}{p}{(1-\zeta_3)}=1. \]
By \eqref{eq:hil1} and our assumption $p\equiv 4,7\bmod 9$, we obtain that
 \begin{equation}\label{eq:non-trivial}
	 \hil{\zeta_3}{p}{(1-\zeta_3)}\neq 1 \text{ and } \hil{\zeta_3}{p}{\BQ_3(\mu_3)}\neq 1.
 \end{equation}
This proves that $\rho(\zeta_3) =\zeta^{\pm 1}_3(1,1,1)$. 
Combining with \eqref{eq:trivial}, we conclude that $|\rho(E_M)_3|=3$.
Then Chevalley's formula gives
	\[|A_L^G|=\frac{3^3}{3\times 3}=3.\]
In particular, $|A_L|\ge 3$. 

Next, we apply Chevalley's formula on $L/M$ with $S=\{\fp \}$ to compute $\Cl^G_{L,\{\fp \}}$.
Define
	\[\beta=\frac{\sqrt[3]{p\alpha}}{\bfN_{\BQ(\mu_p)/M^+}(1-\zeta_p)}.\]
Note that $\beta^3$ generates the ideal $\alpha\CO_M$ whence $\beta \CO_M=\fp$.
It follows that $(E_{M,\{\fp \}})_3$ is generated by $\beta, \zeta_3$ and $E_{M^+}$.
We claim that 
	\[\hil{\beta}{p}{\fl} \neq  \hil{\beta}{p}{\sigma(\fl)}.\]
Indeed, by \eqref{eq:hilbert}, the right hand side equals the Hilbert symbol of $\sigma^{-1} (\beta)$ and $p$ at $\fl$.
Note that $\sigma^{-1}(\beta) = \zeta^{\pm 1}_3 \beta \eta$ for some $\eta\in E_{M^+}$.
Thus the inequality follows from \eqref{eq:trivial} and \eqref{eq:non-trivial}.
By Proposition~\ref{prop:hasse}, this shows that the index 
	\[[E_{M, \{\fp \}} : E_{M,\{ \fp \} }\cap \bfN L^\times ] = 9.\]
By Proposition~\ref{prop:split}, the prime $\fp$ splits in $L$.
It follows from \eqref{eq:amb} that $3$ does not divide $|\Cl^G_{L, \{\fp \}}|$ whence $3$ does not divide $|\Cl_{L, \{\fp \}}|$ by Nakayama's Lemma.
This completes the proof. 
\end{proof}

\begin{proof}[Proof of Theorem~\ref{thm:main}]
By Theorem~\ref{thm:main2}, $A_L$ is non-trivial. It follows that, by Nakayama's lemma, we have $|A^{\Gal(L/K)}_L|\ge 3$.
Since $L/K$ is unramified everywhere, by Hasse's norm theorem and local class field theory (or Proposition~\ref{prop:split}),  we have the unit index $[E_K:E_K\cap \bfN(L^\times)]=1$. Then applying Chevalley's formula with $S=\emptyset$ to the extension $L/K$ gives
	\[|A_K|\ge 9.\]

Recall that $\fp'$ is the prime of $K$ lying above $\alpha\CO_k$.
Note that $\fp' \CO_L=\fp \CO_L$, we have $\Cl_{L, \{\fp' \}}= \Cl_{L, \{\fp \}}$ by definition.
Since $3$ does not divide $|\Cl_{L, \{\fp' \}}|$ by Theorem~\ref{thm:main2} and $\fp'$ splits in $L$ by Proposition~\ref{prop:split}, Chevalley's formula with $S=\{\fp' \}$ will imply that $(\Cl_{K,\{ \fp' \}})_3 \cong \BZ/3\BZ$ if we can show that 
	\[[E_{K, \{\fp' \} }: E_{K, \{\fp' \}}\cap \bfN(L^\times)]=1.\]
Because $\fp'$ splits in $L/K$ by Proposition~\ref{prop:split}, the local extension at $\fp'$ is trivial.
Thus any $\{\fp' \}$-unit is a local norm at $\fp'$ whence is a local norm at every place of $K$ as $L/K$ is unramified.
By Hasse's norm theorem, the above unit index is indeed trivial.

The equality $\fp'^3 = \alpha \CO_K$ implies that $|\Cl_{K}|\leq 3|\Cl_{K,\{ \fp' \}}|$. 
It follows that
	\[|A_K|\leq 9.\]
Hence $|A_K|=9$ and then $|A_F|=3$ by \cite[Lemma~1]{Hon71}.

Let $\tau$ be the non-trivial element of $\Delta=\Gal(K/F)$.
Since $\Delta$ is of order $2$, we have a decomposition of $\BZ_3[\Delta]$-modules
	\[A_K=A_K^+\oplus A_K^-, \text{ where } A^{\pm}_K = \{ a\in A_K| \tau(a) = a^{\pm 1} \}.\]
It is well known  that $|A_K^+|= |A_F|=3$ (for example, using \eqref{eq:amb}).
Thus $A_K$ has a direct factor $\BZ/3\BZ$.
This implies that $A_K\cong(\BZ/3\BZ)^2$, completing the proof of Theorem~\ref{thm:main}.
\end{proof}

\medskip

Ren\'{e} Schoof informed us that, using Theorem~\ref{thm:main}, one can go back to improve Theorem~\ref{thm:main2}. Under the assumptions of Theorem~\ref{thm:main2}, we indeed have 
\begin{equation}\label{eq:schoof1}
A_L \cong \BZ/3\BZ. 
\end{equation}
We write his argument as follows. In what follows, let $G=\Gal(L/k)$ so that $G\cong (\BZ/3\BZ)^2$. 
\begin{lemma}\label{lem:schoof1}
For any subgroup $H$ of $G$ of order $3$, we have $|A^{H}_L| =3$.
\end{lemma}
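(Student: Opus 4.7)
My plan is to treat the four order-$3$ subgroups of $G \cong (\BZ/3\BZ)^2$ separately, exploiting the symmetry of $L/k$. These subgroups correspond via Galois theory to the four cubic subfields of $L/k$. Using $L = KM$ with $K = k(\sqrt[3]{\alpha\ov\alpha})$ and $M = k(\sqrt[3]{\alpha^2\ov\alpha})$, one gets $L = k(\sqrt[3]{\alpha}, \sqrt[3]{\ov\alpha})$, so these four subfields are $K$, $M$, $N_1 := k(\sqrt[3]{\alpha})$, and $N_2 := k(\sqrt[3]{\ov\alpha})$.

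Two cases fall out of prior results. For $H = \Gal(L/M)$ the equality $|A_L^H| = 3$ is exactly the first computation in the proof of Theorem~\ref{thm:main2}. For $H = \Gal(L/K)$, since $L/K$ is everywhere unramified by Proposition~\ref{prop:split}, Hasse's norm theorem forces $[E_K : E_K \cap \bfN L^\times] = 1$; combined with $|A_K| = 9$ from Theorem~\ref{thm:main}, Chevalley's formula \eqref{eq:amb} with $S = \emptyset$ gives $|A_L^H| = |A_K|/[L:K] = 9/3 = 3$.

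For $H = \Gal(L/N_1)$ and $H = \Gal(L/N_2)$ I reduce to a single case by complex conjugation. The element $c \in \Gal(L/\BQ)$ swaps $\alpha \leftrightarrow \ov\alpha$, hence $N_1 \leftrightarrow N_2$, and conjugation by $c$ carries $\Gal(L/N_1)$ onto $\Gal(L/N_2)$; since $c$ acts on $A_L$, this induces an isomorphism $A_L^{\Gal(L/N_1)} \cong A_L^{\Gal(L/N_2)}$. So it suffices to show $|A_L^{\Gal(L/N_1)}| = 3$. For this I apply Chevalley's formula \eqref{eq:amb} to the cyclic cubic extension $L/N_1$ with $S = \emptyset$. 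The required ingredients are $|A_{N_1}|$ (obtained by a parallel application of Chevalley to $N_1/k$, using $|A_k|=1$); the ramified primes of $L/N_1$, which via the base-change identity $L = N_1 \otimes_k N_2$ lie above the ramified primes of $N_2/k$; and the unit index $[E_{N_1} : E_{N_1} \cap \bfN L^\times]$, computed by Proposition~\ref{prop:hasse} using $E_{N_1}/(E_{N_1})^3 \cong (\BZ/3\BZ)^3$ (Dirichlet rank $2$ and $\mu(N_1) = \mu_6$). A useful input from cubic reciprocity in $\BQ(\mu_3)$: after adjusting $\alpha, \ov\alpha$ by units to be primary, the main law $\leg{\alpha}{\ov\alpha} = \leg{\ov\alpha}{\alpha}$ combined with the identity $c\bigl(\leg{\alpha}{\ov\alpha}\bigr) = \leg{\ov\alpha}{\alpha}$ in $\mu_3$ forces $\leg{\alpha}{\ov\alpha} = 1$, so $\alpha$ is a cube modulo $\ov\alpha$. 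Consequently $\ov\alpha$ splits completely in $N_1/k$, yielding three primes of $N_1$ above $\ov\alpha$, each totally ramified in $L/N_1$.

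The hard part will be the cubic Hilbert-symbol analysis at the prime $(1-\zeta_3)$ above $3$: I must decide whether $\alpha$ is a cube in $\BQ_3(\mu_3)^\times$ and so whether $(1-\zeta_3)$ ramifies in $N_1/k$, and then compute the corresponding entries of the map $\rho$ of Proposition~\ref{prop:hasse}. These computations will rest on the hypothesis $\leg{3}{p} = 1$ together with the supplementary cubic reciprocity law at $(1-\zeta_3)$, Lemma~\ref{lem:hil_local}, and the product formula for cubic Hilbert symbols on $\BQ(\mu_3)$, in exactly the style of \eqref{eq:hil1} and \eqref{eq:non-trivial}. Once these symbols are pinned down, substituting into Chevalley's formula should yield $|A_L^{\Gal(L/N_1)}| = 3$, completing the lemma.
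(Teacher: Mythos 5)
Your overall strategy is the same as the paper's: run through the four cubic subfields $K$, $M$, $N_1=k(\sqrt[3]{\alpha})$, $N_2=k(\sqrt[3]{\ov\alpha})$ of $L/k$, quote the $M$-case from the proof of Theorem~\ref{thm:main2}, get the $K$-case from $|A_K|=9$ via Chevalley for the unramified extension $L/K$, and handle $N_1$, $N_2$ (which the paper also treats symmetrically) by Chevalley plus Hilbert symbols. The $K$ and $M$ cases are fine, and your cubic-reciprocity remark that $\leg{\alpha}{\ov\alpha}=1$ (so $\ov\alpha$ splits in $N_1$ and the three primes above it are exactly the ramified primes of $L/N_1$, with completion $\BQ_p$) is correct and is a legitimate substitute for deducing this from Proposition~\ref{prop:split}(2) via complex conjugation.

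The gap is in the $N_1$-case, which you leave as a plan, and the plan as stated would not go through. First, your ``hard part'' is misplaced: the prime $(1-\zeta_3)$ does ramify in $N_1/k$ (by Lemma~\ref{lem:ram} its inertia field in $L/k$ is $M$, so it ramifies in every other cubic subfield), but for that very reason its primes do \emph{not} ramify in $L/N_1$, so they contribute nothing to Chevalley for $L/N_1$; moreover the nontrivial symbols needed both for $A_{N_1}=0$ (Chevalley for $N_1/k$, ramified at $(\alpha)$ and $(1-\zeta_3)$) and for $L/N_1$ live at the $p$-adic places, where one argues exactly as in \eqref{eq:hil1}, not at $3$, so Lemma~\ref{lem:hil_local} and the supplementary law are not the crux. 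Second, and more seriously, your plan to ``substitute into Chevalley with $S=\emptyset$ and get $|A_L^{\Gal(L/N_1)}|=3$'' requires the \emph{exact} value of $[E_{N_1}:E_{N_1}\cap\bfN L^\times]$; since $E_{N_1}/(E_{N_1})^3\cong(\BZ/3\BZ)^3$ and there are three ramified places, the image of $\rho$ could a priori have order $3$ or $9$, and you have no handle on the images of the two fundamental units of the degree-$6$ field $N_1$ at the primes above $\ov\alpha$ (nothing like the $E_{M^+}$-congruence argument of \eqref{eq:trivial} is available there). The missing idea, which the paper uses, is that you do not need the exact index: $\hil{\zeta_3}{p}{\mathfrak{P}}\neq 1$ at a prime $\mathfrak{P}\mid\ov\alpha$ (completion $\BQ_p$, $p\not\equiv 1\bmod 9$, and $L=N_1(\sqrt[3]{p})$) already gives index $\geq 3$, hence $|A_L^{\Gal(L/N_1)}|\leq 3^3/(3\cdot 3)=3$ by \eqref{eq:amb}, and the reverse inequality comes from $A_L\neq 0$ (Theorem~\ref{thm:main2}) together with Nakayama's lemma, a step absent from your outline. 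Without either this trick or an actual computation of the unit images, the $N_1$ (hence also $N_2$) case is not proved.
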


\begin{proof}
The nontrivial intermediate fields of $L/k$ are $K,M, k(\sqrt[3]{\alpha}), k(\sqrt[3]{\bar{\alpha}})$, where $\alpha$ is as in Proposition~\ref{prop:cl_m}. The case $H=\Gal(L/M)$ has been proved in the proof of Theorem~\ref{thm:main2}.
For $H=\Gal(L/K)$, as in proof of Theorem~\ref{thm:main}, one has $|A^H_L|=\frac{|A_K|}{3}=3$; here the last equality is by Theorem~\ref{thm:main}.

Now consider the case $H=\Gal(L/k(\sqrt[3]{\alpha}))$. We first prove $A_{k(\sqrt[3]{\alpha})}=0$ by the same method as in Proposition~\ref{prop:cl_m}. There are precisely two primes $(\alpha), (1-\zeta_3)$ of $k$ ramified in $k(\sqrt[3]{\alpha})/k$. Since $p\equiv 4,7\bmod 9$, we have
$\hil{\zeta_3}{\alpha}{(\alpha)}\neq 1$
as in \eqref{eq:hil1}.
Then Chevalley's formula for $k(\sqrt[3]{\alpha})/k$ and Nakayama's lemma give $A_{k(\sqrt[3]\alpha)}=0$. Now there are precisely three primes of $k(\sqrt[3]{\alpha})$ ramified in $L/k(\sqrt[3]{\alpha})$ which are the primes lying above $\bar{\alpha}$. In particular, the completion at such a prime $\mathfrak{P}$ is isomorphic to $\BQ_p$. Note that $L=k(\sqrt[3]{\alpha}) (\sqrt[3]{p})$. Hence  $\hil{\zeta_3}{p}{\mathfrak{P}}\neq 1$ as $p\equiv 4,7\bmod 9$. Applying Chevalley's formula to $L/k(\sqrt[3]{\alpha})$ gives $|A^H_L|\leq 3$ whence it is $3$ by Nakayama's lemma. The case $H=\Gal(L/k(\sqrt[3]{\alpha}))$ can be proved in an entirely similar way. \end{proof}

\begin{lemma}[Schoof]\label{lem:schoof2}
Let $R$ be a complete local Noetherian ring with maximal ideal $\fm$. Suppose that $\dim \fm/\fm^2  > 1$.  Let $J \subset \fm$  be an ideal for which $J + (x) = \fm$  for every $x\in \fm - \fm^2$. Then $J =\fm$.
\end{lemma}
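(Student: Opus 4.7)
The plan is to reduce to a statement about the cotangent space $\fm/\fm^2$ and then finish by Nakayama's lemma. Concretely, I would first observe that by Nakayama (applied to the finitely generated $R$-module $\fm/J$, using that $\fm$ is finitely generated because $R$ is Noetherian) it suffices to prove that $J + \fm^2 = \fm$. So the whole game is to upgrade the hypothesis, which is phrased in $R$ itself, to a statement in $\fm/\fm^2$.

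Next I would set $k = R/\fm$, view $\fm/\fm^2$ as a $k$-vector space, and let $V := (J + \fm^2)/\fm^2 \subseteq \fm/\fm^2$ be the image of $J$. Any $x \in \fm\setminus\fm^2$ has nonzero image $\bar{x}\in\fm/\fm^2$, and conversely every nonzero $\bar{x}\in \fm/\fm^2$ lifts to some $x\in\fm\setminus\fm^2$. Reducing the hypothesis $J+(x)=\fm$ modulo $\fm^2$ gives $V + k\bar{x} = \fm/\fm^2$ for every nonzero $\bar{x}\in \fm/\fm^2$.

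Now I claim this forces $V = \fm/\fm^2$. If $V$ were a proper subspace and $V \neq 0$, I could pick any nonzero $\bar{x}\in V$; then $V + k\bar{x} = V$ is still proper, contradicting the hypothesis. If $V=0$, then every nonzero $\bar{x}$ would satisfy $k\bar{x} = \fm/\fm^2$, forcing $\dim_k(\fm/\fm^2)\le 1$, which contradicts the assumption $\dim \fm/\fm^2 > 1$. Hence $V = \fm/\fm^2$, i.e.\ $J + \fm^2 = \fm$, and Nakayama then yields $J = \fm$.

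I do not anticipate a real obstacle here: completeness of $R$ plays no role in the argument, and Noetherianness is used only to ensure $\fm$ is finitely generated so that Nakayama applies. The only subtle point is the case analysis on $V$, where one must use $\dim\fm/\fm^2 > 1$ exactly to rule out the possibility $V = 0$; the rest is linear algebra over $k$.
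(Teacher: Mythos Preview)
Your argument is correct and is essentially identical to the paper's proof: both reduce via Nakayama to the image of $J$ in $\fm/\fm^2$, use $\dim\fm/\fm^2>1$ to rule out that this image is zero, and then pick a nonzero vector in the image to conclude it is all of $\fm/\fm^2$. Your remark that completeness is not used is also accurate.
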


\begin{proof}
We may replace $J$ by $J + \fm^2$, because if $J+\fm^2$ is equal to $\fm$, then also  $J = \fm$  by Nakayama's lemma. So we have  $\fm^2 \subset J$. Then $W = J/\fm^2$ is a sub vector space of $V = \fm/\fm^2$ and we know  that $W + v = V$ for every non-zero   vector $v  \in V$. Since $\dim V > 1$, $W$ cannot be zero. So we can find a non-zero $v \in W$. It follows that $W = W + v =V$ and we are done.
\end{proof}

\begin{proof}[Proof of \eqref{eq:schoof1}]
Clearly, $A_L$ is a module over the complete local ring  $R = \BZ_3[G]$. The maximal ideal $\fm$ of $R$ is generated by  the elements  $s-1$ with   $s \in G$. By Theorem~\ref{thm:main2}, $A_L$ is cyclic over the ring $R$. So,  $A_L = R/J$ for some ideal $J\subset R$. Let $s\in G$ be a nontrivial element and write $H$ for the subgroup generated by $s$.
We have an exact sequence
\[ 0\to A^{H}_L \to A_L \xrightarrow[]{s-1} A_L \to A_L/(s-1)A_L \to 0.\]
The rightmost term  has order $3$ for every nontrivial $s\in G$ by Lemma~\ref{lem:schoof1}. In other words, $J+(s-1)=\fm$ for every nontrivial $s\in G$. Since $\fm/\fm^2$ is generated by the elements $s-1$ ($s\in G$) as a $R/\fm$-vector space, it follows from Lemma~\ref{lem:schoof2} that $J=\fm$. Hence $|A_L|=3$.
\end{proof}

%

\textbf{Acknowledgments.}
The authors are very grateful to Ren\'{e} Schoof for informing us the above improvement of Theorem~\ref{thm:main2}.
The authors also would like to thank Franz Lemmermeyer and Yi Ouyang for helpful comments on this article.
The authors are partially supported by Anhui Initiative in Quantum Information Technologies (Grant No. AHY150200) and by the Fundamental Research Funds for the Central Universities (Grant No. WK3470000020 and Grant No. WK0010000061 respectively).
The second named author is also supported by NSFC (Grant No. 12001510).

\end{document}